\newtheorem{theorem}{Theorem}[section]
\newtheorem{proposition}[theorem]{Proposition}
\newtheorem{lemma}[theorem]{Lemma}
\newtheorem{remark}[theorem]{Remark}
\numberwithin{equation}{section}
\begin{document}

\baselineskip=15pt

\title[Equivariant bundles over wonderful compactifications]{On
equivariant principal bundles over wonderful compactifications}

\author[I. Biswas]{Indranil Biswas}

\address{School of Mathematics, Tata Institute of Fundamental
Research, Homi Bhabha Road, Mumbai 400005, India}

\email{indranil@math.tifr.res.in}

\author[S. S. Kannan]{S. Senthamarai Kannan}

\address{Chennai Mathematical Institute, H1, SIPCOT IT Park, Siruseri,
Kelambakkam 603103, India}

\email{kannan@cmi.ac.in}

\author[D. S. Nagaraj]{D. S. Nagaraj}

\address{The Institute of Mathematical Sciences, CIT
Campus, Taramani, Chennai 600113, India}

\email{dsn@imsc.res.in}

\subjclass[2000]{32Q26, 14M27, 14M17}

\keywords{Wonderful compactification, equivariant principal bundle, tangent 
bundle, stability}

\date{}

\begin{abstract}
Let $G$ be a simple algebraic group of adjoint type over $\mathbb C$, and
let $M$ be the
wonderful compactification of a symmetric space $G/H$.
Take a $\widetilde G$--equivariant principal $R$--bundle $E$ on
$M$, where $R$ is a complex reductive algebraic group and $\widetilde G$
is the universal cover of $G$. If the action
of the isotropy group $\widetilde H$ on the fiber of $E$ at the identity 
coset is irreducible, then we prove that $E$ is polystable with respect to any
polarization on $M$. Further, for
wonderful compactification of the quotient of $\text{PSL}(n,{\mathbb 
C})$, $n\,\neq\, 4$ (respectively, $\text{PSL}(2n,{\mathbb C})$) by the normalizer of the
projective orthogonal group (respectively,  the projective 
symplectic group), we prove that the tangent bundle is stable with
respect to any polarization on the wonderful compactification.
\end{abstract}

\maketitle

\section{Introduction}\label{se1}

Let $G$ be a semi-simple linear algebraic group of adjoint type defined over
the field $\mathbb C$ of complex numbers.
The universal cover of $G$ will be denoted by $\widetilde G$. Let $\sigma$
be an algebraic involution of $G$ induced by an automorphism
$\widetilde{\sigma}$ of $\widetilde G$ of order two. The fixed point
subgroup of $G$ for $\sigma$ will be
denoted by $H.$ The quotient $G/H$ is an affine variety. De Concini and Procesi constructed
a compactification of $G/H$ which is known as the wonderful compactification \cite{DP}.
The left--translation action of $G$ on $G/H$ extends to an action of $G$ on
the wonderful compactification $\overline{G/H}$. This produces an action of
$\widetilde G$ on $\overline{G/H}$.
Our aim here is to investigate the $\widetilde G$--equivariant principal
bundles on $\overline{G/H}.$

Let $R$ be a connected reductive complex linear algebraic group. Let $E\,\longrightarrow\,
\overline{G/H}$ be a $\widetilde G$--equivariant algebraic principal
$R$--bundle. The inverse image of $H$ in $\widetilde G$ will be denoted
by $\widetilde H$. Since the isotropy for the point
$e\,=\, eH\, \in\, G/H$ for the action of $\widetilde G$ is $\widetilde H$, we 
have an action of $\widetilde H$ on the fiber $E_e$. Let
\begin{equation}\label{e2}
\gamma\, :\, {\widetilde H}\, \longrightarrow\, \text{Aut}^R(E_e)
\end{equation}
be the corresponding homomorphism, where $\text{Aut}^R(E_e)$ is the group of
algebraic automorphisms of $E_e$ that commute with the action of $R$ on it.
The groups $\text{Aut}^R(E_e)$ and $R$ are isomorphic by an isomorphism which is 
unique up to inner automorphisms.

We prove the following (see Proposition \ref{prop1}):

\textit{If $\gamma({\widetilde H})$ is not contained in any proper parabolic subgroup of
${\rm Aut}^R(E_e),$ then the principal $R$--bundle $E$ is polystable with respect to
every polarization of $\overline{G/H}$.}

For equivariant vector bundles on $\overline{G/H}$, the above proposition can be 
improved; see Proposition \ref{prop2} for the precise statement.

In Section \ref{sec3}, we consider the following two symmetric spaces
$\text{PSL}(n, {\mathbb C})/{\text{NPSO}(n, {\mathbb C})}, \,\, n\neq 4$
and $\text{PSL}(2m, {\mathbb C})/{\text{NPSp}(2m, {\mathbb C})}, \,\, m\geq 2, $ where $\text{NPSO}(n, {\mathbb C})$ (respectively, 
$\text{PSp}(n, {\mathbb C})$ denote the normalizer of the projective 
orthogonal group (respectively, projective symplectic group) in 
$\text{PSL}(n, {\mathbb C})$ (respectively, $\text{PSL}(2m, {\mathbb C})).$
See \cite[p. 7, Lemma]{DP}, for details.

The first one of the above two symmetric spaces corresponds to the involution 
$\sigma$ of $\text{PSL}(n, {\mathbb C})$ induced by the automorphism
$$
A\, \longmapsto\, (A^t)^{-1}\, 
$$
of $\text{SL}(n, {\mathbb C})$. The second one corresponds to the involution
$\sigma$ of $\text{PSL}(2m, {\mathbb C})$ induced by the automorphism
$$
A\, \longmapsto\, J^{-1}(A^t)^{-1}J \, 
$$
of $\text{SL}(2m, {\mathbb C}),$ where 
$$
J\, :=\, \begin{pmatrix}
0 & I_{m\times m}\\
-I_{m\times m} & 0
\end{pmatrix}.
$$

In Theorem \ref{thm1}, and Remark \ref{rem3} we prove the following:

\textit{For the wonderful compactification $ \overline{G/H}$ of the
above two symmetric spaces, the tangent bundle $T \overline{G/H}$ is 
stable with respect to every polarization of $\overline{G/H}$.}

As pointed out by the referee, Theorem \ref{thm1} remains valid as long as the three
conditions stated in the beginning of Section \ref{se3} are valid and
the $H$--module ${\rm Lie}(G)/{\rm Lie}(H)$ is irreducible (see Remark \ref{rem1}). 

\section{Polystability of irreducible equivariant bundles}

We continue with the above notation. The
wonderful compactification $\overline{G/H}$ of the quotient $G/H$ will be
denoted by $M$. The left--translation action of $G$ on $G/H$ extends to an action
$G\times M\, \longrightarrow\, M$. Using the natural projection ${\widetilde G}\,
\longrightarrow\, G$ from the universal cover, the above action of $G$ on $M$
produces an action
\begin{equation}\label{e1}
\rho\, :\, {\widetilde G}\times M\, \longrightarrow\, M
\end{equation}
of $\widetilde G$ on $M$.

Let $R$ be a connected reductive complex linear algebraic group. An \text{equivariant}
principal $R$--bundle on $M$ is an algebraic principal $R$--bundle on $M$ equipped with
a lift of the left-action of $\widetilde G$ in \eqref{e1}. More precisely, an equivariant
$R$--bundle is a pair $(E\, ,\widetilde{\rho})$, where $E\,\longrightarrow\, M$ is an
algebraic principal $R$--bundle, and
$$
\widetilde{\rho}\, :\, {\widetilde G}\times E\,\longrightarrow\, E
$$
is an algebraic action of $\widetilde G$ on the total space of $E$, such that the
following two conditions hold:
\begin{enumerate}
\item the projection of $E$ to $M$ intertwines the actions of
$\widetilde G$ on $E$ and $M$, and

\item the action of $\widetilde G$ on $E$ commutes with the action of $R$ on $E$.
\end{enumerate}

Let $(E\, ,\widetilde{\rho})$ be an equivariant principal $R$--bundle
on $M$. Let
$$
\text{Ad}(E)\, :=\, E\times^R R\,\longrightarrow\, M
$$
be the fiber bundle associated to $E$ for the conjugation action of $R$ on
itself. Since the conjugation action of $R$ on itself preserves the group
structure of $R$, the fibers of $\text{Ad}(E)$ are groups isomorphic to
$R$. To see an explicit isomorphism of $R$ with a fiber $\text{Ad}(E)_x$,
where $x\in\, X$, fix a point $z_0\, \in\, E_x$. Now the map
\begin{equation}\label{h1}
R\, \longrightarrow\, \text{Ad}(E)_x
\end{equation}
that sends any $g\, \in\, R$ to the
equivalence class of $(z_0\, ,g)\, \in\, E\times R$ is an isomorphism of
groups. Therefore, $\text{Ad}(E)_x$ is identified
with $R$ by an isomorphism which is unique up to an inner automorphism of $R$.

The equivariant $R$--bundle $(E\, ,\widetilde{\rho})$ is called
\textit{irreducible} if the image $\gamma({\widetilde H})$ is not contained in
any proper parabolic subgroup of $\text{Aut}^R(E_e)\,=\, \text{Ad}(E)_e$, where 
${\widetilde H}\, \subset\, \widetilde{G}$, as before, is the inverse image of 
$H$, and $\gamma$ is the homomorphism in \eqref{e2}.

Fix a polarization ${\mathcal L}\,\in\, H^2(M,\, {\mathbb Q})$ on $M$,
meaning ${\mathcal L}$ is the class of a very ample line bundle on $M$. The
degree of a torsionfree coherent sheaf on $M$ is defined using $\mathcal L$
as follows: for a torsionfree coherent sheaf $F$ on $M,$ 
$$\text{degree}\, (F)\,:=\, (c_1(F)\cdot {\mathcal L}^{n-1})\cap
[M]\, \in\, \mathbb Z\, , $$
where $n$ is the (complex) dimension of $M$.

An algebraic vector bundle $V$ on $M$ is called \textit{semistable}
(respectively, \textit{stable}) if for every coherent subsheaf
$F\, \subset\, V$ with $0 \,< \,\text{rank}(F) <
\text{rank}(V)$, the inequality 
$$
\mu(F) \,:= \,\frac{\text{degree}(F)}{\text{rank}(F)} \,\leq\,
\mu(V) \,:= \,\frac{\text{degree}(V)}{\text{rank}(V)}
$$
(respectively, $\mu(F) \,<\, \mu(V)$) holds. A semistable vector bundle
is called \textit{polystable} if it is a direct sum of stable vector bundles.

A principal $R$--bundle $E$ on $M$ is called \textit{semistable}
(respectively, \textit{stable}) if for every maximal proper parabolic 
subgroup $P\,\subset\, R$ and for every reduction
$\tau \,:\, U \,\longrightarrow (E\vert_U)/P$ over some
Zariski open set $U$ of $M$ such that the codimension of
$M\setminus U$ is at least two,
we have $\text{degree}(\tau^*T_{\rm rel}) \,\geq\, 0$ 
(respectively, $\text{degree}(\tau^*T_{\rm rel}) \,> \,0$), where $T_{rel}$
is the relative tangent bundle for the natural projection of
$(E/P)\vert_U$ to $U$. A principal $R$--bundle $E$ on $M$ is
said to be \textit{polystable} if there is a parabolic subgroup $P$ of $R$ 
and a reduction of structure group $E_L\,\subset\, E$ to a Levi
factor $L\, \subset\, P$ such that 
\begin{enumerate}
\item the principal $L$ bundle $E_L$ on $M$ is stable, and

\item the principal $P$--bundle obtained by extending the structure group of
$E_L$
$$E_P\,:=\,E_L\times^L P$$ has the 
property
that for any character $\chi$ of $P$ which is trivial on the center of
$R$, the line bundle on $M$ associated to $E_P$ for $\chi$ has degree zero.
\end{enumerate}

\begin{proposition}\label{prop1}
Let $(E\, ,\widetilde{\rho})$ be an irreducible equivariant $R$--bundle on $M$.
Then the principal $R$--bundle $E$ is polystable.
\end{proposition}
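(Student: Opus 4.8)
The plan is to establish semistability and polystability in two stages, in both cases by exploiting the uniqueness of a canonical reduction of structure group together with the equivariance coming from $\widetilde\rho$. First I would recall that a principal $R$--bundle over a polarized projective variety which fails to be semistable carries a unique canonical (Harder--Narasimhan type) reduction $E_P\,\subset\, E$ to a proper parabolic subgroup $P\,\subset\, R$; and that a semistable bundle which fails to be polystable carries a unique canonical reduction to a proper parabolic, provided by the socle (Jordan--Hölder) filtration. Since $\widetilde G$ is connected and $(E\, ,\widetilde\rho)$ is equivariant, for each $g\,\in\,\widetilde G$ the automorphism $\widetilde\rho(g)$ of $E$ carries such a canonical reduction of $E$ to the canonical reduction of $E$ again; by uniqueness it must preserve it. Hence in either stage the relevant canonical reduction is $\widetilde G$--invariant.

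Next I would transport this equivariant reduction to the fiber over the base point $e\,\in\, G/H\,\subset\, M$. A priori a canonical reduction is defined only over the complement of a closed subset of codimension at least two, so it cannot be restricted to a single point directly; however $G/H$ is a single $\widetilde G$--orbit, and over this smooth homogeneous open subset equivariance forces the reduction to be an honest sub-bundle. Restricting it to $E_e$ yields a $P$--subtorsor of $E_e$ invariant under the $\widetilde H$--action, equivalently a parabolic subgroup $P_e\,\subset\, \text{Aut}^R(E_e)\,=\,\text{Ad}(E)_e$ carried to itself by that action. The action of $h\,\in\,\widetilde H$ on $E_e$ is by definition $\gamma(h)$, and the induced action on $\text{Aut}^R(E_e)$ is the inner conjugation $\psi\,\longmapsto\,\gamma(h)\,\psi\,\gamma(h)^{-1}$; therefore invariance of $P_e$ says precisely that $\gamma(\widetilde H)$ normalizes $P_e$.

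To conclude, I would invoke that a parabolic subgroup of a reductive group is self-normalizing, so that $\gamma(\widetilde H)\,\subset\, N(P_e)\,=\,P_e$. If the canonical reduction were proper, then $P_e$ would be a proper parabolic subgroup of $\text{Aut}^R(E_e)$ containing $\gamma(\widetilde H)$, contradicting the irreducibility hypothesis. Hence in the first stage no proper Harder--Narasimhan reduction exists and $E$ is semistable, and in the second stage the socle reduction is improper, so $E$ is polystable. The step I expect to be the main obstacle is the second stage: one must work with the correct canonical reduction whose properness is genuinely equivalent to the failure of \emph{polystability} (and not merely of stability), and verify that its restriction over $e$ defines a proper parabolic of $\text{Aut}^R(E_e)$. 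The self-normalizing property of parabolics is exactly what upgrades ``normalizes'' to ``is contained in,'' and it is at this point that the irreducibility assumption is used.
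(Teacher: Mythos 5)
Your argument is correct and follows essentially the same route as the paper: uniqueness of the Harder--Narasimhan (respectively, socle) reduction makes it $\widetilde G$--invariant, and restricting to a fiber over the open orbit produces a proper parabolic subgroup of $\mathrm{Ad}(E)_e$ containing the image of the isotropy representation, contradicting irreducibility. The only cosmetic differences are that the paper works at a translate $\overline{g}\in U$ of the base point via the conjugated isotropy group $g\widetilde{H}g^{-1}$ instead of observing that the open orbit lies in $U$, deduces containment in the parabolic directly from preservation of the subtorsor rather than via self-normalization, and handles the polystability stage by passing to $\mathrm{ad}(E)$ (using that $E$ is polystable if and only if $\mathrm{ad}(E)$ is) before applying the socle reduction.
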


\begin{proof}
We will first prove that $E$ is semistable. Assume that $E$ is not semistable. Then,
there is a Zariski open subset $U\, \subset\, M$ such that the complement
$M\setminus U$ is of codimension at least
two, a proper parabolic subgroup $P\, \subset\, R$, and an algebraic
reduction of structure group
$$
E_P\, \subset\, E\vert_U
$$
of $E$ to $P$ over $U,$ such that $E_P$ is the Harder-Narasimhan
reduction for $E$ (see \cite{AAB} for Harder-Narasimhan reduction). Let
$$
\text{Ad}(E_P)\, :=\, E_P\times^P P\,\longrightarrow\, U
$$
be the adjoint bundle associated to $E_P$ for the conjugation action of
$P$ on itself. Just as for $\text{Ad}(E)$, the fibers of $\text{Ad}(E_P)$
are groups isomorphic to $P$ because the conjugation action of
$P$ on itself preserves the group structure.
The natural inclusion of $E_P\times P$ in $(E\vert_U)\times G$
produces an inclusion
$$
\text{Ad}(E_P)\,\hookrightarrow\, \text{Ad}(E)\vert_U\, .
$$
In the isomorphism in \eqref{h1} if we take $z_0\,\in\, (E_P)_x$, then
the isomorphism sends $P$ isomorphically to the fiber $\text{Ad}(E_P)_x$.
Therefore, $\text{Ad}(E_P)_x$ is a parabolic subgroup of
$\text{Ad}(E)_x$ because $P$ is a parabolic subgroup of $R$.

Take an element $g\,\in\, \widetilde G$ such that the point
$$
\overline{g}\, :=\, \rho(g\, ,e)\, \in\, G/H\, \subset\, M
$$
lies in the above open subset $U\,\subset\, M$. Consider the automorphism of $E$
defined by $z\,\longmapsto\, \widetilde{\rho}(g\, ,z)$. The subgroup
$g{\widetilde H} g^{-1}\, \subset\,\widetilde G$ preserves the fiber $E_{\overline{g}}$
because $g {\widetilde H} g^{-1}$ is the isotropy of $\overline{g}$ for the
action $\rho$ in \eqref{e1}. Therefore, we get a homomorphism
\begin{equation}\label{e3}
\theta\, :\, g {\widetilde H} g^{-1}\, \longrightarrow\, \text{Ad}(E)_{\overline{g}}\, .
\end{equation}
The action of $g$ on $E$ produces an isomorphism of algebraic groups
\begin{equation}\label{eta}
\eta\, :\, \text{Ad}(E)_e\, \longrightarrow\, \text{Ad}(E)_{\overline{g}}\, .
\end{equation}
The following diagram is commutative
\begin{equation}\label{e4}
\begin{matrix}
{\widetilde H} & \stackrel{\gamma}{\longrightarrow} &\text{Ad}(E)_e\\
~\Big\downarrow g' && ~ \Big\downarrow \eta\\
g{\widetilde H}g^{-1}& \stackrel{\theta}{\longrightarrow} & \text{Ad}(E)_{\overline{g}}
\end{matrix}
\end{equation}
where $\gamma$, $\theta$ and $\eta$ are defined in \eqref{e2}, \eqref{e3} and
\eqref{eta} respectively, and $g'(y)\,=\, gyg^{-1}$.

The polarization ${\mathcal L}$ on $M$ is preserved
by the action of $\widetilde G$ in \eqref{e1} because $\widetilde G$ is connected.
The action of $\widetilde G$ on $E$ preserves the pair $U$ and
$E_P$ because the Harder--Narasimhan reduction is unique for a given
polarization. Consequently, the image $\theta(g {\widetilde H} g^{-1})$
in \eqref{e3} is contained in the parabolic subgroup
\begin{equation}\label{z1}
\text{Ad}(E_P)_{\overline{g}}\,\subset\, \text{Ad}(E)_{\overline{g}}\, .
\end{equation}
On the other hand, since $\gamma({\widetilde H})$ is not contained in
any proper parabolic subgroup of
$\text{Ad}(E)_e$, from the commutativity of the diagram in \eqref{e4} we conclude that
$\theta(g {\widetilde H}g^{-1})$ is not contained in any proper parabolic subgroup of
$\text{Ad}(E)_{\overline{g}}$. But this is in contradiction with \eqref{z1}.
Therefore, we conclude that the principal $R$--bundle $E$ is semistable.

We will now prove that $E$ is polystable. Note that $E$ is polystable 
if and only if the adjoint vector bundle $\text{ad(E)} \,:=\,E\times^R \mathfrak{R}$
is polystable, where $\mathfrak{R}$ is the Lie algebra of the reductive group
$R$ (see \cite[p. 224, Corollary 3.8]{AB}). Thus it is enough to prove
that $\text{ad}(E)$ is polystable. To prove that $\text{ad(E)}$ is
polystable, we simply replace the Harder--Narasimhan reduction
in the above proof by the socle reduction of the semistable vector bundle
$\text{ad(E)}$ (see \cite[p. 218, Proposition 2.12]{AB}). Since the socle reduction
is unique, simply repeating the above proof we get that $\text{ad}(E)$
is polystable.
\end{proof}

\section{Equivariant vector bundles}\label{se3}

Let $G$ and $H$ be as before. From now on we will assume the following:

\begin{enumerate}
\item The connected component $\widetilde{H}^0\,\subset\, \widetilde{H}$ containing
the identity element is a simple algebraic group.

\item For any maximal torus $T_{0}$ of the
connected component $\widetilde{H}^{0}\,\subset\, \widetilde{H}$
containing the identity element, for any Borel 
subgroup $B_{0}$ of $\widetilde{H}^{0}$ containing $T_{0}$, for any Borel subgroup 
$B$ of $\widetilde{G}$ containing $B_{0}$ and for any maximal torus $T$ of $B$ containing 
$T_{0}$, the restriction map $X(B)^{+}\longrightarrow X(B_{0})^{+}$ is surjective. Here
$X(B)$ (respectively, $X(B)^+$) denotes the set of all
characters (respectively, dominant characters) of $B$. Similarly 
$X(B_0)^+$ is defined.

\item The restriction of any simple root $\alpha$ of $\widetilde{G}$ (with 
respect to  $T$ and $B$) to $B_{0}$ is nonzero and it is a 
nonnegative integral linear combination of simple roots of $\widetilde{H}^{0}$
(with respect to $T_{0}$ and $B_{0}$).
\end{enumerate}

\begin{remark}\label{rem0}
{\rm The pairs of groups $G \,=\, {\rm PSL}(n,\mathbb{C})\, ,\, H \,=\,
{\rm NPSO}(n,\mathbb{C})\, ,
\,\, n\neq 2, 4$  and $G \,=\, {\rm PSL}(2m,\mathbb{C})\, ,\, H \,=\,
{\rm PSp}(2m,\mathbb{C})\, , \,\,
 m\geq 2,$ satisfy the  above conditions (1), (2), (3). To see this we consider
${\rm SO}(n,\mathbb{C})$ (respectively, ${\rm Sp}(2m,\mathbb{C})$) as the subgroup
of the special linear group preserving the nondegenerate symmetric (respectively,
skew-symmetric) bilinear form 
$$
\sum_{i=1}^n X_iY_{n+1-i}\, \ 
({\rm respectively,}\ \sum_{i=1}^m (X_iY_{2m+1-i}-X_{m+i}Y_{m+1-i}) )\, .
$$
 \begin{enumerate}
 
 \item The simplicity of the above groups $H$ follows from the facts
 about the classical groups of type $A, B, C$ and $D$.
 
 \item By choice of the nondegenerate bilinear forms we see that the
 inclusion of maximal torus $T_0\,\subset\, T$ and the inclusion of 
 Borel subgroup $B_0\,\subset\, B$ satisfies the hypothesis (2), (3) above
 (see \cite[p. 215, p. 243, p. 272]{FH}).
 \end{enumerate}}
\end{remark}

\begin{lemma}\label{res}
Every irreducible representation of $\widetilde{H}^{0}$ is a restriction of 
some representation of $\widetilde{H}$.
\end{lemma}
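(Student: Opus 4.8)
The plan is to realize every irreducible $\widetilde{H}^{0}$--module inside the restriction of a suitable irreducible $\widetilde G$--module, and then to cut out the desired $\widetilde H$--submodule using Clifford theory. Let $V$ be an irreducible representation of $\widetilde H^{0}$, and let $\lambda\,\in\, X(B_0)^{+}$ be its highest weight. First I would invoke hypothesis (2) to choose a dominant character $\mu\,\in\, X(B)^{+}$ of $\widetilde G$ whose restriction to $B_0$ equals $\lambda$, and let $W_\mu$ be the irreducible $\widetilde G$--module with highest weight $\mu$. Restricting the $\widetilde G$--action to the subgroup $\widetilde H\,\subset\,\widetilde G$ makes $W_\mu$ an $\widetilde H$--module, and \emph{a fortiori} an $\widetilde H^{0}$--module.

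The key computation, and the place where hypothesis (3) enters, is the analysis of the $T_0$--weights of $W_\mu$. Every $T$--weight of $W_\mu$ has the form $\mu-\sum_\alpha n_\alpha\alpha$ with $n_\alpha\,\in\,\mathbb Z_{\ge 0}$ and $\alpha$ running over the simple roots of $\widetilde G$; restricting to $T_0$ gives $\lambda-\sum_\alpha n_\alpha(\alpha|_{T_0})$. By hypothesis (3) each $\alpha|_{T_0}$ is a nonzero nonnegative integral combination of simple roots of $\widetilde H^{0}$, so $\sum_\alpha n_\alpha(\alpha|_{T_0})$ vanishes only when all $n_\alpha=0$. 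Hence $\lambda$ is the unique $T_0$--weight restricting from $\mu$; it is maximal, in the dominance order of $\widetilde H^{0}$, among all $T_0$--weights of $W_\mu$, and it occurs with multiplicity one. Consequently $V$ is the unique irreducible constituent of $W_\mu|_{\widetilde H^{0}}$ of highest weight $\lambda$, and it appears with multiplicity one.

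Next I would upgrade this to an $\widetilde H$--statement. Since $\widetilde H^{0}$ is normal in $\widetilde H$ with finite quotient, Clifford theory shows that $\widetilde H$ permutes the isotypic components of $W_\mu|_{\widetilde H^{0}}$ and that conjugate constituents occur with equal multiplicity; in particular every conjugate $V^{h}$ (for $h\,\in\,\widetilde H$) is again a constituent of $W_\mu|_{\widetilde H^{0}}$. Because $\widetilde H^{0}$ is simple by hypothesis (1), conjugation by $h$ acts on its based root datum through a diagram automorphism $\tau_h$ (inner automorphisms being absorbed), so the highest weight of $V^{h}$ is $\tau_h(\lambda)$. As every constituent has highest weight $\preceq\lambda$, we get $\tau_h(\lambda)\,\preceq\,\lambda$ for all $h$; applying this to $h^{-1}$ and using that a diagram automorphism preserves the dominance order yields $\lambda\,\preceq\,\tau_h(\lambda)$ as well, whence $\tau_h(\lambda)=\lambda$ and $V^{h}\,\cong\, V$. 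Thus $V$ is $\widetilde H$--stable.

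Finally, stability together with multiplicity one lets me conclude: the $V$--isotypic component $W_V\,\subset\, W_\mu$ is a single copy of $V$, and since each $h\,\in\,\widetilde H$ carries $W_V$ to an $\widetilde H^{0}$--submodule isomorphic to $V^{h}\,\cong\, V$, it must carry $W_V$ into $W_V$; by equality of dimensions $h\cdot W_V=W_V$. Hence $W_V$ is an $\widetilde H$--submodule of $W_\mu$ whose restriction to $\widetilde H^{0}$ is $V$, and $W:=W_V$ is the required representation of $\widetilde H$. The main obstacle is the stability step: a priori an irreducible module of the identity component need not extend to the whole group, and the argument above shows that it is precisely the maximality of $\lambda$, forced by hypothesis (3), that prevents the conjugates of $V$ from being distinct constituents. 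I note also that, because $W_\mu$ already carries a genuine linear $\widetilde H$--action as the restriction of a $\widetilde G$--module, no projective or cohomological obstruction to extending $V$ can arise once $W_V$ is seen to be $\widetilde H$--stable.
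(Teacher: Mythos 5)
Your proposal is correct and follows essentially the same route as the paper: lift $\lambda$ to a dominant character of $B$ via hypothesis (2), use hypothesis (3) to see that $\lambda$ is the unique maximal $T_0$--weight of the resulting $\widetilde G$--module and occurs with multiplicity one, and then show the component group fixes $\lambda$ so that the copy of $V$ is $\widetilde H$--stable. The only cosmetic difference is how you rule out $\sigma(\lambda)\prec\lambda$ (you apply the inequality to $h^{-1}$ and use that the induced automorphism preserves dominance, while the paper iterates $\sigma$ to its finite order); both are valid.
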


\begin{proof}
Let $V$ be an irreducible representation of $\widetilde{H}^0$.
If $V$ is the trivial representation, then there is nothing to prove. 
Otherwise, let $\lambda$ be the highest weight of $V$. Then, by using the
part (2) of the hypothesis, there is a dominant character $\chi$ of $B$ whose 
restriction to $B_{0}$ is $\lambda$. Hence, the irreducible representation 
$V(\chi)$ of $\widetilde{G}$ with highest weight $\chi$ is a direct sum of 
$V$ with multiplicity one and of some irreducible representations of 
$\widetilde{H}^0$ with highest weights $\mu$ satisfying $\mu\,<\,\lambda$ for 
the dominant ordering in $\widetilde{H}^0$. This is because every weight 
$\nu$ of $V(\chi)$ satisfies $\nu\,<\,\chi$ for the dominant ordering in 
$\widetilde{G}$ with respect to $T$ and $B$ and by using the part (3) of the 
hypothesis that the restriction to $B_{0}$ of every simple root $\alpha$ of 
$\widetilde{G}$ with respect to $T$ and $B$ is nonzero and is a 
nonnegative integral linear combination of simple roots of $\widetilde{H}^0$.

Since any two Borel subgroups of $\widetilde{H}^0$ are conjugate in 
$\widetilde{H}^0$ and any 
two maximal tori of $B_{0}$ are conjugate in $B_{0}$, we may choose the
representatives of $\widetilde{H}/\widetilde{H}^0$ to lie in both 
$N_{\widetilde{H}}(B_{0})$ and $N_{\widetilde{H}}(T_{0})$. Consequently, the 
finite group $\widetilde{H}/\widetilde{H}^{0}$ acts on the group of characters 
of $B_{0}$ preserving the dominant characters (not necessarily preserving
pointwise). Further, since the representatives of $\widetilde{H}/\widetilde{H}^{0}$ can be
chosen in $N_{\widetilde{H}}(B_{0})$ the action of $\widetilde{H}/\widetilde{H}^{0}$
preserves the positive roots of $\widetilde{H}^{0}$ with respect to $B_0$. Thus, the
$\widetilde{H}$--span of $V$ in $V(\chi)$ 
is a direct sum of irreducible representations of 
$\widetilde{H}^0$ whose highest weights are of the form $\sigma(\lambda)$
with $\sigma$ running over the elements of the finite 
group $\widetilde{H}/\widetilde{H}^0$. By the previous paragraph, for every 
$\sigma \,\in\, \widetilde{H}/\widetilde{H}^{0}$, either 
$\sigma(\lambda)\,<\, \lambda$ or $\sigma(\lambda)\,=\,\lambda$. 
On the other hand, if for some 
$\sigma \,\in\, \widetilde{H}/\widetilde{H}^{0}$ we have
$\sigma(\lambda)\,<\, \lambda$ then 
$$\lambda \,=\, \sigma^n(\lambda)\,<\,
\sigma^{n-1}(\lambda) \,< \,\ldots \, \sigma(\lambda)\,<\,\lambda\, ,$$
where $n$ is the order of $\sigma$, which is a contradiction. 
Therefore, the $\widetilde{H}$--span of $V$ coincides with $V$, implying that 
$V$ is a restriction of a representation of $\widetilde{H}$.
\end{proof}

A vector bundle $W$ of rank $r$ on $M$ is called
{\it equivariant} if $W$ corresponds to an equivariant
principal $\text{GL}(r,{\mathbb C})$--bundle. Equivalently, an
equivariant vector bundle is a pair $(W\, , \widetilde{\rho}),$ where
$W$ is an algebraic vector bundle on $M,$ and
$$
\widetilde{\rho}\, :\, {\widetilde G}\times W\,\longrightarrow\, W
$$
is an algebraic action of $\widetilde G$ on $W,$ such that the
following two conditions hold:
\begin{enumerate}
\item the projection of $W$ to $M$ intertwines the actions of
$\widetilde G$ on $W$ and $M,$ and

\item the action $\widetilde{\rho}$ preserves the linear structure
of the fibers of $W$.
\end{enumerate}
An equivariant vector bundle $(W\, , \widetilde{\rho})$ is called
\textit{irreducible} if the representation 
$$
\rho_e \,:\, {\widetilde H}\, \longrightarrow\,\text{GL}(W_e)
$$
given by the action of the isotropy subgroup for the point
$e\, \in\, M$ is irreducible. Note that the
irreducible equivariant vector bundles of rank $r$ correspond to the
irreducible equivariant principal $\text{GL}(r,{\mathbb C})$--bundles.

\begin{proposition}\label{prop2}
Let $(W\, , \widetilde{\rho})$ be an irreducible equivariant vector bundle on $M$
of rank $r$. Then either $W$ is stable, or $W$ admits a decomposition
$$
W\,=\, L^{\oplus r}\, ,
$$
where $L$ is a line bundle on $M$.
\end{proposition}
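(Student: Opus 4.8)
The plan is to combine Proposition \ref{prop1} with the representation theory set up for Lemma \ref{res}. First, since an irreducible equivariant vector bundle of rank $r$ is the same thing as an irreducible equivariant principal $\text{GL}(r,\mathbb{C})$--bundle, Proposition \ref{prop1} gives that $W$ is polystable; I write its canonical isotypic (socle) decomposition $W \cong \bigoplus_i V_i \otimes_{\mathbb C} U_i$, where the $V_i$ are mutually non-isomorphic stable bundles of slope $\mu(W)$ and the $U_i$ are the multiplicity spaces. Because this decomposition is canonical and $\widetilde G$ is connected, the action $\widetilde{\rho}$ permutes the isotypic summands trivially, so each $V_i\otimes_{\mathbb C} U_i$ is a $\widetilde G$--equivariant subbundle of $W$. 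Restricting to the fibre over $e$ gives a decomposition of the $\widetilde H$--module $W_e$ into submodules $(V_i\otimes U_i)_e$; irreducibility of $W_e$ forces all but one of these to vanish, and a summand whose fibre over $e$ is zero must itself be zero, since $G/H=\widetilde G\cdot e$ is dense in $M$. Hence $W\cong V\otimes_{\mathbb C} U\cong V^{\oplus m}$ for a single stable bundle $V$, with $m=\dim U$.

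If $m=1$ then $W=V$ is stable, giving the first alternative. So I assume $m\geq 2$ and must show that $V$ is a line bundle, i.e. $\dim V_e=1$ (then $W=V^{\oplus r}$ with $V$ a line bundle). Since $V$ is stable it is simple, and $\rho(g)^{\ast}V\cong V$ for every $g\in\widetilde G$ (by uniqueness of the stable factor in $\rho(g)^{\ast}W\cong W$); as $\widetilde G$ is simply connected, the resulting projective representation lifts and $V$ acquires a $\widetilde G$--equivariant structure, so $V_e$ is an $\widetilde H$--module. The multiplicity space $U=\text{Hom}(V,W)$ then carries a $\widetilde G$--module structure: indeed $\widetilde G$ acts on $\text{End}(W)=\text{Mat}_m(\mathbb C)$ by conjugation, and by the Skolem--Noether theorem together with simple connectedness this lifts to a homomorphism $\widetilde G\to\text{GL}(U)$, with $W\cong V\otimes_{\mathbb C} U$ equivariantly. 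Taking fibres over $e$ gives an isomorphism of $\widetilde H$--modules $W_e\cong V_e\otimes U$. Since $W_e$ is irreducible, both $V_e$ and $U|_{\widetilde H}$ are irreducible $\widetilde H$--modules, and $U$ is a nontrivial irreducible $\widetilde G$--module.

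The heart of the argument is to prove $\dim V_e=1$. I restrict to the simple group $\widetilde H^{0}$ of hypothesis (1). The weight computation in the proof of Lemma \ref{res} shows that the finite group $\widetilde H/\widetilde H^{0}$ fixes every dominant weight of $\widetilde H^{0}$; by Clifford theory this means that any irreducible $\widetilde H$--module restricts to $\widetilde H^{0}$ as an isotypic module. Writing $V_e|_{\widetilde H^{0}}=S_1^{\oplus a}$ and $U|_{\widetilde H^{0}}=S_2^{\oplus b}$ with $S_1,S_2$ irreducible $\widetilde H^{0}$--modules, irreducibility of $W_e$ forces $W_e|_{\widetilde H^{0}}=(S_1\otimes S_2)^{\oplus ab}$ to be isotypic as well; since the highest-weight (Cartan) component of $S_1\otimes S_2$ occurs with multiplicity one, this forces $S_1\otimes S_2$ to be irreducible. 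But over a connected simple group the Weyl dimension formula yields the strict inequality $\dim S_1\cdot\dim S_2>\dim(\text{Cartan component})$ whenever both $S_1,S_2$ are nontrivial, so $S_1\otimes S_2$ cannot be irreducible unless one factor is trivial. As $U$ is a nontrivial irreducible $\widetilde G$--module, hypothesis (3) guarantees that $S_2$ is nontrivial; hence $S_1$ is trivial, i.e. $\widetilde H^{0}$ acts trivially on $V_e$.

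It remains to pass from ``$\widetilde H^{0}$ acts trivially on $V_e$'' to ``$\dim V_e=1$'', and this is the step I expect to be the main obstacle. At this point $V_e$ is (the pullback of) an irreducible representation of the finite group $\widetilde H/\widetilde H^{0}$, so $V|_{G/H}$ is a flat bundle with finite monodromy. I would conclude $\dim V_e=1$ by exploiting that $\widetilde H/\widetilde H^{0}$ is abelian in the cases of interest, so that its irreducible representations are one--dimensional, and/or that $M$ is rational, which should force such a $V$ to split into equivariant line bundles and hence, being stable, to be a single line bundle $L$. Granting this, $V=L$ and $W=L^{\oplus r}$. The delicate points are thus purely representation--theoretic and topological: the Clifford--theoretic reduction to $\widetilde H^{0}$ (which rests on hypotheses (1)--(3) and on Lemma \ref{res}), the Weyl--dimension rigidity that a tensor product of two nontrivial irreducibles of a simple group is never irreducible, and the final rigidity forcing the residual finite--monodromy factor to be one--dimensional.
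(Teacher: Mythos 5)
Your argument tracks the paper's proof closely through its first two stages: polystability via Proposition \ref{prop1}; the canonical decomposition $W \,=\, \bigoplus_i W_i \otimes_{\mathbb C} H^0(M, W\otimes W_i^\vee)$ into stable isotypic pieces, preserved by the connected group $\widetilde G$; the reduction to a single summand $W \,\cong\, V \otimes_{\mathbb C} U$ using irreducibility of $W_e$; and the construction of equivariant structures on $V$ and $U$ (the paper does this via the extension $e \to {\mathbb C}^* \to \widetilde{\mathrm{Aut}}(W_1) \to \widetilde G \to e$, a Lie-algebra splitting from semisimplicity, and simple connectedness of $\widetilde G$ --- the same mechanism you invoke more briefly). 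Up to the identification $W_e \cong V_e \otimes U$ as irreducible $\widetilde H$-modules, everything you write is sound.

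The gap is exactly where you flag it, and it is genuine. From your Clifford-theoretic analysis you extract only that $W_e\vert_{\widetilde H^0}$ is \emph{isotypic}, i.e. $W_e\vert_{\widetilde H^0} \cong (S_1\otimes S_2)^{\oplus ab}$ with $S_1\otimes S_2$ irreducible; this is perfectly compatible with $a=\dim V_e>1$, so "$\widetilde H^0$ acts trivially on $V_e$" does not yield $\dim V_e=1$. The paper closes this by using the full strength of Lemma \ref{res}: because every irreducible $\widetilde H^0$-module extends to $\widetilde H$, the irreducible $\widetilde H$-module $W_e$ is already irreducible \emph{as an $\widetilde H^0$-module} --- multiplicity one, not merely isotypic. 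Once that is in hand, the irreducibility of $V_e\vert_{\widetilde H^0}\otimes U\vert_{\widetilde H^0}$ over the simple group $\widetilde H^0$ forces $\dim V_e=1$ or $\dim U=1$ outright (this is \cite[Lemma 3.2]{BK}, i.e. your Weyl-dimension observation), which is precisely the dichotomy of the proposition; no further step is needed. Your proposed patch does not survive scrutiny under the stated hypotheses: nothing in conditions (1)--(3) makes $\widetilde H/\widetilde H^0$ abelian, and flatness of $V$ only over the open orbit $G/H$, whose complement in $M$ is a divisor rather than a set of codimension two, does not force $V$ to split on $M$ (simple connectedness of $M$ gives no control over the monodromy representation of $\pi_1(G/H)$). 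The correct fix is not a new endgame but upgrading "isotypic" to "irreducible" via Lemma \ref{res} before applying the tensor-product rigidity.
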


\begin{proof}
The vector bundle $W$ is polystable by Proposition \ref{prop1}. Therefore,
$W$ can be uniquely decomposed as
\begin{equation}\label{f3}
W\,=\, \bigoplus_{i=1}^\ell W_i \bigotimes\nolimits_{\mathbb C}
H^0(M,\, W\bigotimes W^\vee_i)\, ,
\end{equation}
where $W_i$ are distinct stable vector bundles on $M$. The above assertion of
uniqueness means the following: if
$$
W\,=\, \bigoplus_{j=1}^{\ell'} W'_j\bigotimes\nolimits_{\mathbb C} {\mathbb C}^{r_j}\, ,
$$
where $W'_1\, ,\cdots\, , W'_{\ell'}$ are non-isomorphic stable vector bundles,
then $\ell\,=\, \ell'$ and there is a permutation $\alpha$ of
$\{1\, ,\cdots\, ,\ell\}$ such that the subbundle
$W_i \bigotimes_{\mathbb C} H^0(M,\, W\bigotimes W^\vee_i)$ of $W$
in \eqref{f3} coincides with the above subbundle
$$
W'_{\alpha(i)}\bigotimes\nolimits_{\mathbb C} {\mathbb C}^{r_{\alpha(i)}}\, \subset\, W\, .
$$
In particular, $W_i$ is isomorphic to $W'_{\alpha(i)}$ and $\dim
H^0(M,\, W\bigotimes W^\vee_i)\,=\, r_{\alpha(i)}$. This uniqueness follows
immediately from the Krull--Schmidt decomposition of a vector bundle (see
\cite[p. 315, Theorem 3]{At}) and the fact that for any two non-isomorphic
stable vector bundles $W^1$ and $W^2,$
$$
H^0(M,\, W^1\bigotimes (W^2)^\vee)\,=\, 0\, .
$$
For any $g\, \in\, \widetilde{G}$, let
\begin{equation}\label{rg}
{\rho}_g\, :\, M\, \longrightarrow\, M
\end{equation}
be the automorphism defined by $x\,\longmapsto\, \rho(g^{-1}\, ,x)$.
Similarly, let
\begin{equation}\label{rg2}
\widetilde{\rho}_g\, :\, W\, \longrightarrow\, W
\end{equation}
be the map defined by $v\, \longmapsto\,\widetilde{\rho}(g\, ,v)$; note that
$\widetilde{\rho}_g$ is an isomorphism of the vector bundle $W$ with the
pullback $\rho^*_g W$. The pullback
$$
\rho^*_g W\,=\, \bigoplus_{i=1}^\ell
\rho^*_g (W_i \bigotimes\nolimits_{\mathbb C} H^0(M,\, W\bigotimes W^\vee_i))
$$
of the decomposition in \eqref{f3} coincides with the unique decomposition
(unique in the above sense) of $\rho^*_g W$. Hence the isomorphism
$\widetilde{\rho}_{g}$ takes the above decomposition of $\rho^*_g W$
to a permutation $\nu(g)$ of the decomposition of $W$ in \eqref{f3}.
Therefore, we get a map
$$
\nu\, :\, \widetilde{G}\, \longrightarrow\, P(\ell)\, ,
$$
where $P(\ell)$ is the group of permutations of $\{1\, ,\cdots\, ,\ell\}$,
that sends any $g\,\in\, \widetilde{G}$ to the above permutation $\nu(g)$.
This map $\nu$ is clearly continuous, the permutation $\nu(e)$ is the identity
map of $\{1\, ,\cdots\, ,\ell\},$ and $\widetilde G$ is connected. These
together imply that $\nu$ is the constant map to the
identity map of $\{1\, ,\cdots\, ,\ell\}$. In other words, the action of
$\widetilde G$ on $W$ preserves the subbundle
$$
W_i \bigotimes\nolimits_{\mathbb C} H^0(M,\, W\bigotimes W^\vee_i)\,\subset\, W
$$
in \eqref{f3} for every $i$.

We will next show that each vector bundle $W_i$ admits a 
$\widetilde G$ equivariant structure.

We have noted above that the action of $\widetilde G$ on $W$ preserves the
subbundle $W_i \bigotimes_{\mathbb C} H^0(M,\, W\bigotimes W^\vee_i)$. The automorphism
$\widetilde{\rho}_g$ of $W_i \bigotimes_{\mathbb C}H^0(M,\, W\bigotimes W^\vee_i)$
(see \eqref{rg2}) produces an isomorphism
$$
({\rho}^*_g W_i) \bigotimes\nolimits_{\mathbb C}H^0(M,\,
W\bigotimes W^\vee_i) \,\stackrel{\sim}{\longrightarrow}\,
W_i \bigotimes\nolimits_{\mathbb C}H^0(M,\, W\bigotimes W^\vee_i)\, ,
$$
where $\rho_g$ is defined in \eqref{rg}.
Since the vector bundle $W_i$ is indecomposable, the vector bundle
${\rho}^*_g W_i$ is also indecomposable, and hence from
\cite[p. 315, Theorem 2]{At} we know that $W_i$ is isomorphic to ${\rho}^*_g W_i$.

Take any integer $1\,\leq\, i\,\leq\, \ell$.
Let $\text{Aut}(W_i)$ denote the group of all algebraic automorphisms of the vector bundle
$W_i$. It should be clarified that any automorphism of $W_i$ lying in $\text{Aut}(W_i)$
is over the identity map of $M$. The group $\text{Aut}(W_i)$ is the Zariski open subset
of the affine space $H^0(M,\, W_i\bigotimes W^\vee_i)$ defined by the locus of
invertible endomorphisms. Therefore, $\text{Aut}(W_i)$ is a connected complex
algebraic group.

Let $\widetilde{\text{Aut}}(W_i)$ denote the set of all pairs of the form
$(g\, ,f)$, where $g\, \in\,\widetilde G$ and
$$
f\, :\, \rho_g^* W_i\,\longrightarrow\, W_i
$$
is an algebraic isomorphism of vector bundles, where $\rho_g$ is the automorphism
in \eqref{rg}. This set $\widetilde{\text{Aut}}(W_i)$
has a tautological structure of a group
$$
(g_2\, ,f_2)\cdot (g_1\, ,f_1)\,=\, (g_1g_2\, , f_2\circ \rho_{g_2}^*(f_1))\, .
$$
Since the vector bundle $W_i$ is simple (recall that it is stable), the group
$\widetilde{\text{Aut}}(W_i)$ fits in a short exact sequence of algebraic groups
\begin{equation}\label{h2}
e\,\longrightarrow\,{\mathbb C}^*\,\longrightarrow\,\widetilde{\text{Aut}}(W_i) \,
\stackrel{\delta_i}{\longrightarrow}\, \widetilde G\,
\longrightarrow\,e\, ,
\end{equation}
where $\delta_i$ sends any $(g\, ,f)$ to $g$. Note that the earlier 
observation that $W_i$ is isomorphic to ${\rho}^*_g W_i$ for all $g\,\in\, 
\widetilde G$ implies that the homomorphism $\delta_i$ in \eqref{h2} is 
surjective.

The Lie algebras of $\widetilde G$ and
$\widetilde{\text{Aut}}(W_i)$ will be denoted by $\mathfrak g$ and $A(W_i)$ respectively. Let
\begin{equation}\label{h3}
\delta'_i\, :\, A(W_i)\,\longrightarrow\,{\mathfrak g}
\end{equation}
be the homomorphism of Lie algebras corresponding to
$\delta_i$ in \eqref{h2}. Since ${\mathfrak g}$ is semisimple, there is a
homomorphism of Lie algebras
$$
\tau_i\, :\, {\mathfrak g}\,\longrightarrow\,A(W_i)
$$
such that
\begin{equation}\label{es}
\delta'_i\circ\tau_i\,=\, \text{Id}_{\mathfrak g}
\end{equation}
\cite[p. 91, Corollaire 3]{Bo}. Fix a homomorphism $\tau_i\, :\, {\mathfrak g}\,
\longrightarrow\,A(W_i)$ satisfying \eqref{es}. Since the group $\widetilde G$
is simply connected, there is a unique algebraic representation
$$
\widetilde{\tau}_i\, :\, {\widetilde G}\,\longrightarrow\,
\widetilde{\text{Aut}}(W_i)
$$
such that the corresponding homomorphism of Lie algebras coincides with
$\tau_i$. From \eqref{es} it follows immediately that $\delta_i\circ\widetilde{\tau}_i
\,=\, {\rm Id}_{\widetilde G}$.

We now note that $\widetilde{\tau}_i$ defines an action of $\widetilde G$ 
on $W_i$. The pair $(W_i\, ,\widetilde{\tau}_i)$ is an equivariant vector 
bundle. In particular, the fiber $(W_i)_e$ is a representation of 
$\widetilde H$.

Consider the decomposition of $W$ in \eqref{f3}. The actions of $\widetilde 
G$ on $W$ and $W_i$ together define a linear action of $\widetilde G$ on 
$H^0(M,\, W\bigotimes W^\vee_i)$. With respect to these actions, the 
isomorphism in \eqref{f3} is $\widetilde G$--equivariant.

Since the isomorphism in \eqref{f3} is $\widetilde G$--equivariant,
we get an isomorphism of representations of $\widetilde H$
$$
W_e\, =\, \bigoplus_{i=1}^\ell (W_i)_e\bigotimes\nolimits_{\mathbb C}
H^0(M,\, W\bigotimes W^\vee_i)\, ;
$$
we noted above that both $(W_i)_e$ and $H^0(M,\, W\bigotimes W^\vee_i)$ are
representations of $\widetilde H$.
Since the $\widetilde H$--module $W_e$ is irreducible, we conclude that
$\ell\,=\, 1$. So
$$
W\,=\, W_1\bigotimes H^0(M,\, W\bigotimes W^\vee_1)\, ,
$$
and we have an isomorphism of representations of $\widetilde{H}$
$$
W_e\, =\, (W_1)_e\bigotimes H^0(M,\, W\bigotimes W^\vee_1)\, .
$$

As in Section \ref{se1}, let $\widetilde{\sigma}$ be the lift of
$\sigma$ to $\widetilde{G}$.

By Lemma \ref{res}, any irreducible $\widetilde{H}^{0}$ module
$V$ of $W_{e}$ is a restriction of a $\widetilde{H}$
module. Hence, it follows that the irreducible $\widetilde{H}$
module $W_{e}$ is an irreducible $\widetilde{H}^{0}$ module as well.

Recall the assumption that $H^{0}$ is a simple algebraic group. From the 
irreducibility of the $\widetilde{H}^{0}$--module $W_e$ it now follows that
\begin{itemize}
\item either $\dim H^0(M,\, W\bigotimes W^\vee_1)\,=\, 1$, or

\item $\text{rank}(W_1)\,=\, 1$
\end{itemize}
(see \cite[p. 1469, Lemma 3.2]{BK}).

We now observe that if $\dim H^0(M,\, W\bigotimes W^\vee_1)\,=\, 1$, then $W\,=\, W_1$
is stable. On the other hand, if $\text{rank}(W_1)\,=\, 1$, then
$$
W\,=\, W^{\oplus r}_1\, ,
$$
where $r$ is the rank of $W.$ This completes the proof of the proposition.
\end{proof}

\section{Orthogonal and symplectic quotient of $\text{SL}_n$}\label{sec3}

In this section we consider the wonderful compactification 
$\overline{G/H}$ of the following two symmetric spaces 
corresponding to the orthogonal and symplectic structures:

The first one corresponds to the involution $\sigma$
of $G\,=\, \text{PSL}(n, {\mathbb C})$ induced by the automorphism
$$
A\, \longmapsto\, (A^t)^{-1}\, 
$$
of $\text{SL}(n, {\mathbb C}), \, \, n\neq 2,4. $
The connected component of $H\,=\, G^\sigma$ is the projective orthogonal group 
$\text{PSO}(n, {\mathbb C}).$

The second one corresponds to the involution $\sigma$ of $\text{PSL}(2m,
{\mathbb C})$ induced by the automorphism
$$
A\, \longmapsto\, J^{-1}(A^t)^{-1}J \, 
$$
of $\text{SL}(2m, {\mathbb C}),$ where 
\begin{equation}\label{j}
J\, :=\, \begin{pmatrix}
0 & I_{m\times m}\\
-I_{m\times m} & 0
\end{pmatrix}.
\end{equation}
In this case, we have $H\,:=\, G^\sigma\,=\, \text{PSp}(2m, {\mathbb C}).$

\begin{theorem}\label{thm1}
For the above two cases, the tangent bundle of $\overline{G/H}$ is stable with respect
to any polarization on $\overline{G/H}.$
\end{theorem}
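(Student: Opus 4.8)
The plan is to deduce the stability of $T\overline{G/H}$ from the machinery of the previous two sections rather than by any direct curvature or cohomological computation. First I would observe that the tangent bundle $T\overline{G/H}$ carries a canonical $\widetilde G$--equivariant structure, since the $\widetilde G$--action $\rho$ on $M\,=\,\overline{G/H}$ in \eqref{e1} lifts tautologically to its tangent bundle via the differential of each $\rho_g$. Thus $(T M\, ,d\rho)$ is an equivariant vector bundle in the sense of Section \ref{se3}, and the pairs $(G\, ,H)$ in both cases satisfy the standing hypotheses (1), (2), (3) by Remark \ref{rem0}. The whole argument then reduces to understanding the fiber $(TM)_e$ as a representation of $\widetilde H$ and invoking Proposition \ref{prop2}.

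The key computation is to identify the isotropy representation $\rho_e\,:\,\widetilde H\,\longrightarrow\,\mathrm{GL}((TM)_e)$. Since $e\,=\,eH$ lies in the open orbit $G/H\,\subset\, M$, the fiber $(TM)_e$ is canonically isomorphic, as an $\widetilde H$--module, to $\mathrm{Lie}(G)/\mathrm{Lie}(H)$ with the adjoint action of $\widetilde H$ (equivalently, to the $-1$ eigenspace of the differential $d\widetilde\sigma$ of the involution acting on $\mathrm{Lie}(\widetilde G)$). Concretely, in the orthogonal case this is the space of trace-free symmetric $n\times n$ matrices under conjugation by $\mathrm{PSO}(n,\mathbb C)$, and in the symplectic case the analogous complement under $\mathrm{PSp}(2m,\mathbb C)$. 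I would then verify that this $\widetilde H$--module is \emph{irreducible}: this is a standard fact about the isotropy representation of these particular symmetric spaces, and it can be checked either by the branching argument implicit in Lemma \ref{res} or by directly citing the decomposition of $\Lambda^2$ and $\mathrm{Sym}^2$ of the standard representation for classical groups (as in \cite[p.\ 215, p.\ 243, p.\ 272]{FH}). Granting irreducibility of $(TM)_e$, the pair $(TM\, ,d\rho)$ is an irreducible equivariant vector bundle, so Proposition \ref{prop2} applies: either $TM$ is stable, or $TM\,=\,L^{\oplus r}$ for a line bundle $L$ with $r\,=\,\dim M$.

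It therefore remains to rule out the split case $TM\,=\,L^{\oplus r}$. I expect this to be the main obstacle, since it is the only place where the specific geometry of the wonderful compactification (as opposed to the abstract equivariant formalism) must enter. The cleanest route is to note that $TM\,=\,L^{\oplus r}$ would force $TM$ to be a projectively flat bundle, hence to have vanishing second Chern character class modulo the first, which is incompatible with $M$ being a rational projective variety of Picard number greater than one whose boundary $M\setminus(G/H)$ is a normal crossing divisor with several irreducible components (there are $\ell\,=\,\mathrm{rank}$ of the symmetric space such components). Equivalently, a splitting $TM\,\cong\,L^{\oplus r}$ would make $c_1(M)$ divisible by $r$ in $H^2(M,\mathbb Z)$ and the Chern classes completely determined by $L$; comparing this with the known boundary divisor structure of $\overline{G/H}$ yields a contradiction. (The restriction $n\,\neq\,4$ in the orthogonal case presumably enters precisely here, where $\mathrm{PSO}(4,\mathbb C)$ fails to be simple and the isotropy module ceases to be irreducible, so that hypothesis (1) breaks down.) Once the split case is excluded, Proposition \ref{prop2} leaves only stability, completing the proof.
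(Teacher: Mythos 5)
Your overall strategy coincides with the paper's for the first two steps: you equip $T\overline{G/H}$ with its tautological $\widetilde G$--equivariant structure, identify the isotropy representation at $e$ with the $\widetilde H$--module ${\mathfrak g}/{\mathfrak h}$, verify its irreducibility via the decompositions of $\bigwedge^2{\mathbb C}^{2m}$ and $\mathrm{Sym}^2{\mathbb C}^n$, and invoke Proposition \ref{prop2} to reduce to the dichotomy ``stable or $L^{\oplus r}$.'' That part is fine and is exactly what the paper does.

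The genuine gap is in the exclusion of the split case $T\overline{G/H}\cong L^{\oplus r}$, which you yourself flag as the main obstacle but never actually close. Your proposed route --- projective flatness, divisibility of $c_1(M)$ by $r$, and an unspecified ``comparison with the boundary divisor structure'' --- is asserted to yield a contradiction without any computation; to make it work you would need the explicit expression of $-K_{\overline{G/H}}$ in terms of the boundary divisors and a verification that it is not divisible by $r=\dim M$ in $\mathrm{Pic}(M)$, none of which is supplied. The paper's argument is both shorter and complete: $\overline{G/H}$ is unirational, hence simply connected by Serre \cite{Se}; a simply connected projective manifold whose tangent bundle splits holomorphically into line bundles is isomorphic to $({\mathbb C}{\mathbb P}^1)^r$ by Brunella--Pereira--Touzet \cite[Theorem 1.2]{BPT}; and the tangent bundle of $({\mathbb C}{\mathbb P}^1)^r$ is visibly not of the form $L^{\oplus r}$ for a single line bundle $L$. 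You should replace your sketch by this chain (or supply the missing Chern-class computation). A secondary inaccuracy: the restriction $n\neq 4$ does not enter in the split-case exclusion, and for $n=4$ the module ${\mathfrak g}/{\mathfrak h}\simeq sl(2,{\mathbb C})\otimes sl(2,{\mathbb C})$ is still irreducible (see Remark \ref{rem4}); what fails is the simplicity of $\widetilde H^0$ required in the final step of the proof of Proposition \ref{prop2}, where the tensor-factor dichotomy of \cite[Lemma 3.2]{BK} is used.
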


\begin{proof}
The Lie algebras of $G$ and $H$ will be denoted by $\mathfrak g$ and $\mathfrak h$
respectively. Consider the natural action of $H$ on ${\mathfrak g}/{\mathfrak h}.$
We will show that the $H$--module ${\mathfrak g}/{\mathfrak h}$ is irreducible.

First consider the case corresponding to the symplectic structure. In this case,
$G\,=\, \text{PSL}(2m, {\mathbb C})$ and $H\,=\,\text{PSp}(2m, {\mathbb C})$. Let
$$
\omega\, \in\, \bigwedge\nolimits^2 {\mathbb C}^{2m}
$$
be the standard symplectic form given by the matrix $J$ in \eqref{j}.
Using the symplectic form $\omega$, we identify
$\text{End}({\mathbb C}^{2m})\,=\, {\mathbb C}^{2m}\bigotimes ({\mathbb C}^{2m})^\vee$
with ${\mathbb C}^{2m}\bigotimes{\mathbb C}^{2m}$. Note that this decomposes as
$$
{\mathbb C}^{2m}\bigotimes{\mathbb C}^{2m}\,=\, \text{Sym}^2({\mathbb C}^{2m})\bigoplus
\bigwedge\nolimits^2 {\mathbb C}^{2m}\, .
$$
The $\text{PSp}(2m, {\mathbb C})$--module
${\mathfrak g}/{\mathfrak h}$ is isomorphic to the $\text{PSp}(2m, {\mathbb C})$--module
$(\bigwedge\nolimits^2 {\mathbb C}^{2m})/{\mathbb C}\cdot\omega$. It is known that the
$\text{PSp}(2m, {\mathbb C})$--module
$(\bigwedge\nolimits^2 {\mathbb C}^{2m})/{\mathbb C}\cdot\omega$ is irreducible
\cite[p. 260, Theorem 17.5]{FH} (from \cite[Theorem 17.5]{FH} it follows immediately that
the $\text{PSp}(2m, {\mathbb C})$--module $\bigwedge\nolimits^2 {\mathbb C}^{2m}$ is the
direct sum of a trivial $\text{PSp}(2m, {\mathbb C})$--module of dimension one and an
irreducible $\text{PSp}(2m, {\mathbb C})$--module).

Next consider the case corresponding to the orthogonal structure. So
$G\,=\, \text{PSL}(n, {\mathbb C})$ and $\text{PO}(n, {\mathbb C})$ is the connected
component of $H$ containing the identity element. Using the standard orthogonal form
$$
\omega'\, \in\,\text{Sym}^2({\mathbb C}^{n})
$$
on ${\mathbb C}^n$, identify ${\mathbb C}^{n}\bigotimes ({\mathbb C}^{n})^\vee$
with
$$
{\mathbb C}^{n}\bigotimes{\mathbb C}^{n}\,=\,
\text{Sym}^2({\mathbb C}^{n})\bigoplus
\bigwedge\nolimits^2 {\mathbb C}^{n}\, .
$$
Now the $H$--module ${\mathfrak g}/{\mathfrak h}$ is isomorphic to the $H$--module
$\text{Sym}^2({\mathbb C}^{n})/{\mathbb C}\cdot\omega'$. It is known that the $H$--module
$\text{Sym}^2({\mathbb C}^{n})/{\mathbb C}\cdot\omega'$ is irreducible \cite[p. 296,
Ex. 19.21]{FH} (from \cite[p. 296, Ex. 19.21]{FH} it follows that the $H$--module
$\text{Sym}^2({\mathbb C}^{n})$ is the
direct sum of a trivial $H$--module of dimension one and an irreducible $H$--module).

Fix a polarization on $\overline{G/H}$. Let $r$ be the dimension of $\overline{G/H}$.

The action of $G$ on $M$ gives an action of the isotropy subgroup $H$ on the tangent
space $T_e\overline{G/H}$. 
We note that the $H$--module $T_e\overline{G/H}$ is isomorphic to the
$H$--module ${\mathfrak g}/\mathfrak h$. Since the $H$--module ${\mathfrak g}/\mathfrak h$
is irreducible, from Proposition \ref{prop2} and Remark \ref{rem0}
we conclude that either the tangent
bundle $T\overline{G/H}$ is stable or $T\overline{G/H}$ is isomorphic to $L^{\oplus r}$
for some line bundle $L$ on $\overline{G/H}$.

Now using an argument in \cite{BK} it can be shown that $T\overline{G/H}$ is
not of the form $L^{\oplus r}$. Nevertheless, we reproduce the argument below in
order to be self--contained.

Assume that $T\overline{G/H}$ is isomorphic to $L^{\oplus r}$.
The variety $\overline{G/H}$ is unirational, because $G$ is so.
Hence $\overline{G/H}$ is simply
connected \cite[p. 483, Proposition 1]{Se}. As $T\overline{G/H}$
holomorphically splits into a direct sum of line bundles and $\overline{G/H}$
is simply connected, it follows that
$$
\overline{G/H}\,=\, ({\mathbb C}{\mathbb P}^1)^r
$$
\cite[p. 242, Theorem 1.2]{BPT}. But the tangent bundle of $({\mathbb C}{\mathbb
P}^1)^r$ is not of the form $L^{\oplus r}$. Therefore, $T\overline{G/H}$ is not
of the form $L^{\oplus r}$. This completes the proof.
\end{proof}

\begin{remark}\label{rem3}
{\rm The wonderful compactification of 
${\rm PSL}(2, \mathbb{C})/{\rm NPSO}(2, \mathbb{C})$ is isomorphic to $\mathbb{P}^2.$
The tangent bundle of $\mathbb{P}^2$ is known to be stable (see \cite{PW})}.
\end{remark}

We thank the referee for pointing out the following:

\begin{remark}\label{rem1}
{\rm The proof of Theorem \ref{thm1} remains valid if the three conditions stated in
the beginning of Section \ref{se3} are valid and the $H$--module ${\mathfrak g}/\mathfrak
h$ is irreducible. Therefore, the tangent bundle of $\overline{G/H}$ is stable with
respect to any polarization on $\overline{G/H}$ if the $H$--module ${\mathfrak g}/
\mathfrak h$ is irreducible. If $G/H$ is a non--Hermitian symmetric space, then the
$H$--module ${\mathfrak g}/\mathfrak h$ is irreducible.}
\end{remark}

\begin{remark}\label{rem4}
{\rm In Remark \ref{rem1} the hypothesis of simplicity of $\widetilde{H}^0$ is
necessary. For example, in the case of wonderful compactification of 
${\rm PSL}(4, \mathbb{C})/{\rm NPSO}(4, \mathbb{C})$ we can not use the arguments
at the end of the proof of Proposition \ref{prop2}. Though the $H$--module ${\mathfrak g}/
\mathfrak h \,\simeq\, 
sl(2,\mathbb{C})\bigotimes sl(2,\mathbb{C})$ is irreducible, it is a
tensor product of two non-trivial irreducible representations, where $sl(2,\mathbb{C})$
is the Lie algebra of ${\rm PSL}(2,\mathbb{C}).$ For the
identification of ${\rm PSO}(4, \mathbb{C})$ with ${\rm PSL}(2,\mathbb{C})\times
{\rm PSL}(2,\mathbb{C})$ (see \cite[p. 369]{FH}).}
\end{remark}

\section*{Acknowledgements}
We are very grateful to the referee for comments to improve the
exposition. In particular, Remark \ref{rem1} is due to the referee.
The first-named author acknowledges support of the J. C. Bose Fellowship.


\begin{thebibliography}{ZZZZ}

\bibitem[AAB]{AAB} B. Anchouche, H. Azad and I. Biswas, Harder--Narasimhan 
reduction for principal bundles over a compact K\"ahler manifold, 
\textit{Math. Ann.} \textbf{323} (2002), 693--712.

\bibitem[AB]{AB} B. Anchouche and I. Biswas, Einstein--Hermitian 
connections on polystable principal bundles over a compact K\"ahler 
manifold, \textit{Amer. Jour. Math.} \textbf{123} (2001), 207--228.

\bibitem[At]{At} M. F. Atiyah, On the Krull--Schmidt theorem with 
application to sheaves, {\it Bull. Soc. Math. Fr.} \textbf{84} (1956), 
307--317.

\bibitem[BK]{BK} I. Biswas and S. S. Kannan, Stability of the tangent bundle of 
the wonderful compactification of an adjoint group, {\it Documenta Math.} 
\textbf{18} (2013), 1465--1472.

\bibitem[Bo]{Bo} N. Bourbaki, \textit{\'El\'ements de math\'ematique. XXVI. 
Groupes et alg\`ebres de Lie. Chapitre 1: Alg\`ebres de Lie}, Actualit\'es 
Sci. Ind. No. 1285, Hermann, Paris, 1960.

\bibitem[BPT]{BPT} M. Brunella, J. V. Pereira and F. Touzet, K\"ahler 
manifolds with split tangent bundle, {\it Bull. Soc. Math. Fr.} {\bf 134} 
(2006), 241--252.

\bibitem[DP]{DP} C. De Concini and C. Procesi, Complete symmetric 
varieties, \textit{Invariant theory} (Montecatini, 1982), 1--44, Lecture 
Notes in Math., 996, Springer, Berlin, 1983.

\bibitem[FH]{FH} W. Fulton and J. Harris, \textit{Representation theory. A 
first course,} Graduate Texts in Mathematics, 129. Readings in Mathematics. 
Springer-Verlag, New York, 1991.

\bibitem[PW]{PW} T. Peternell and J. A. Wi\'sniewski, On stability of tangent bundles
of Fano manifolds with $b_2=1$, {\it Jour. Alg. Geom.} {\bf 4} (1995), 363--384.

\bibitem[Ri]{Ri} R. W. Richardson, Orbits, invariants, and representations 
associated to involutions of reductive groups, \textit{Invent. Math.} {\bf 
66} (1982), 287--312.

\bibitem[Se]{Se} J.-P. Serre, On the fundamental group of a unirational 
variety, {\it Jour. Lond. Math. Soc.} {\bf 34} (1959), 481--484.

\end{thebibliography}
\end{document}